\newcommand{\Ra}{\Rightarrow}
\newcommand{\U}{\mathcal U}
\newcommand{\w}{\omega}
\newcommand{\e}{\varepsilon}
\newcommand{\supp}{\mathrm{supp}}
\newcommand{\IZ}{\mathbb Z}
\newcommand{\IN}{\mathbb N}
\newcommand{\Aut}{\mathrm{Aut}}
\newcommand{\HM}{\mathsf{HM}}
\newcommand{\TM}{\mathbf{TM_1}}
\newcommand{\IR}{\mathbb R}
\newcommand{\IT}{\mathbb T}
\newcommand{\pr}{\mathrm{pr}}
\newtheorem{theorem}{Theorem}
\newtheorem{proposition}{Proposition}
\newtheorem{lemma}{Lemma}
\newtheorem{corollary}{Corollary}
\newtheorem{problem}{Problem}
\newtheorem{example}{Example}
\theoremstyle{definition}
\newtheorem{definition}{Definition}
\newtheorem{remark}{Remark}
\title[Each topological group embeds into a duoseparable topological group]{Each topological group embeds into\\ a duoseparable topological group}
\author{Taras Banakh, Igor Guran, Alex Ravsky}
\address{T.Banakh: Ivan Franko National University of Lviv (Ukraine), and Jan Kochanowski University in Kielce (Poland)}
\email{t.o.banakh@gmail.com}
\address{I.Guran: Ivan Franko National University of Lviv (Ukraine)}
\email{igor-guran@ukr.net}
\address{O.Ravsky: Pidstryhach Institute for Applied Problems of Mechanics and Mathematics National Academy of Sciences of Ukraine}
\email{alexander.ravsky@uni-wuerzburg.de}
\subjclass{22A05; 22A22; 22B05; 54D65}
\keywords{Topological group, locally compact abelian topological group, unital topologized magma, separable, duoseparable, precompact, narrow, preseparable, Hartman-Mycielski construction, semidirect product}
\begin{document}
\begin{abstract} A topological group $X$ is called {\em duoseparable} if there exists a countable set $S\subseteq X$ such that $SUS=X$ for any neighborhood $U\subseteq X$ of the unit. We construct a functor $F$ assigning to each (abelian) topological group $X$ a duoseparable (abelian-by-cyclic) topological group $FX$, containing an isomorphic copy of $X$. In fact, the functor $F$ is defined on the category of unital topologized magmas. Also we prove that each $\sigma$-compact locally compact abelian topological group embeds into a duoseparable abelian-by-countable locally compact topological group.
 \end{abstract}
\maketitle

Let us recall that a topological space is {\em separable} if it contains a dense countable subset. For topological groups, the separability can be characterized as follows.

\begin{theorem}\label{t1} For a topological group $X$ the following conditions are equivalent:
\begin{enumerate}
\item $X$ is separable;
\item $X$ contains a countable subset $S$ such that $X=SU$ for any neighborhood $U\subseteq X$ of the unit;
\item $X$ contains a countable subset $S$ such that $X=US$ for any neighborhood $U\subseteq X$ of the unit;
\item $X$ contains a countable subset $S$ such that $X=USU$ for any neighborhood $U\subseteq X$ of the unit.
\end{enumerate}
\end{theorem}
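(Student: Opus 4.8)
The plan is to reduce all three nontrivial equivalences to a single, purely topological identity about closures in a topological group: for every subset $A\subseteq X$, with $U$ ranging over the neighborhoods of the unit $e$,
\[
\overline A=\bigcap_U UA=\bigcap_U AU=\bigcap_U UAU .
\]
Granting this, each of the conditions (2), (3), (4) asserts exactly that $X$ is contained in one of these intersections evaluated at $A=S$. For instance ``$X=SU$ for every $U$'' means $X\subseteq\bigcap_U SU=\overline S$, and symmetrically for $US$ and $USU$; conversely $\overline S=X$ forces each of $X=SU,\ X=US,\ X=USU$ since $\overline S\subseteq SU\cap US\cap USU$. Thus every one of (2)--(4) is equivalent to $\overline S=X$, which for a countable $S$ is precisely separability (1). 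So the whole theorem collapses to the displayed identity.

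First I would record the two one-sided identities $\overline A=\bigcap_U UA=\bigcap_U AU$, which are standard. Since left translations are homeomorphisms, $\{Ux:U\in\mathcal N_e\}$ is a neighborhood base at $x$, so $x\in\overline A$ iff $Ux\cap A\neq\emptyset$ for all $U$, iff $x\in U^{-1}A$ for all $U$, iff $x\in\bigcap_U UA$ (as $U^{-1}$ again runs over all neighborhoods); the right-hand version is symmetric. I would also note that it is harmless to let $U$ range only over open symmetric neighborhoods, since every neighborhood contains such a one and enlarging $U$ only enlarges $UA$, so the quantifier ``for any neighborhood $U$'' matches the intersection over all $U$ without trouble.

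The one genuinely new point --- and the step I expect to be the main obstacle --- is the two-sided identity $\overline A=\bigcap_U UAU$. The inclusion $\overline A\subseteq\bigcap_U UAU$ is immediate from $\overline A\subseteq UA\subseteq UAU$ (using $e\in U$). For the reverse inclusion I would invoke continuity of multiplication: fixing $x\in\bigcap_U UAU$ and an open neighborhood $O\ni x$, the map $(v_1,v_2)\mapsto v_1xv_2$ is continuous and sends $(e,e)$ to $x$, so there is an open symmetric neighborhood $V$ of $e$ with $VxV\subseteq O$. Applying the hypothesis with $U=V$ yields $a\in A$ and $v_1,v_2\in V$ with $x=v_1av_2$, whence $a=v_1^{-1}xv_2^{-1}\in V^{-1}xV^{-1}=VxV\subseteq O$; therefore $O\cap A\neq\emptyset$ and $x\in\overline A$. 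Equivalently, this computation shows that the ``thick'' sets $\{VxV\}$ form a neighborhood base at $x$, which is the real crux; everything else is quantifier bookkeeping and the definition of separability via a countable dense set.
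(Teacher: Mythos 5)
The paper gives no proof of Theorem~\ref{t1} (it is explicitly ``left to the reader as an exercise''), so there is nothing to compare against; judged on its own, your argument is correct and complete. Reducing all three conditions to the closure identity $\overline A=\bigcap_U UA=\bigcap_U AU=\bigcap_U UAU$ is exactly the standard route the authors surely intend, and your verification of the only nontrivial inclusion $\bigcap_U UAU\subseteq\overline A$ (via continuity of $(v_1,v_2)\mapsto v_1xv_2$ at $(e,e)$ and symmetry of $V$) is sound.
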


In this theorem for two subsets $A,B$ of a group $X$ we write $AB=\{ab:a\in A,\;b\in B\}$. Theorem~\ref{t1} (whose proof is left to the reader as an exercise) motivates the following definition, which is the main new concept of this paper.

\begin{definition} A topological group $X$ is defined to be {\em duoseparable} if $X$ contains a countable subset $S$ such that $X=SUS$ for any neighborhood $U\subseteq X$ of the unit.
\end{definition}

It turns out that the duoseparability is not equivalent to the separability and moreover, each topological group $X$ is a subgroup of some duoseparable topological group $FX$, which can be constructed in a functorial way. In fact, the functor $F$ acts in the category of unital topologized magmas, which is much wider than the category of topological groups.

Following Bourbaki, by a {\em magma} we understand a set $X$ endowed with a binary operation\break $\cdot :X\times X\to X$, \; $\cdot:(x,y)\mapsto xy$. 
A magma $X$ is called {\em unital} if $X$ contains a two-sided unit, i.e., an element $1_X\in X$ such that $1_X\cdot x=x\cdot 1_X=x$ for all $x\in X$.  The element $1_X$ is unique and is called {\em the unit} of the unital magma.

A magma is a {\em semigroup} if its binary operation is associative. Unital semigroups are usually called {\em monoids}.

A {\em topologized magma} is a topological space $X$ endowed with a binary operation $\cdot:X\times X\to X$. A topologized magma is called a {\em topological magma} if its binary operation $\cdot:X\times X\to X$ is continuous. 

Unital topologized magmas are objects of the category $\TM$ whose morphisms are continuous functions $f:X\to Y$ between topologized magmas such that $f(1_X)=1_Y$ and $f(x\cdot y)=f(x)\cdot f(y)$ for all $x,y\in X$. Such functions $f$ will be called {\em continuous homomorphisms} of unital topologized magmas. 
It is clear that each (para)topological group is a unital topological magma.

A bijective self-map $\alpha:X\to X$ of a topologized magma $X$ is called an {\em automorphism} of $X$ if $\alpha$ and $\alpha^{-1}$ are continuous homomorphisms.  It is easy to see that the unit of a unital topologized magma is a fixed point of any automorphisms of $X$. The set $\Aut(X)$ of automorphisms of a topologized magma $X$ is a group with respect to the operation of composition. 

\begin{definition}\label{d:sep}
A unital topologized magma $X$ is called 
\begin{itemize}
\item {\em left separable} if $X$ contains a countable subset $S$ such that $SU=X$ for any neighborhood $U\subseteq X$ of the unit;
\item {\em right separable} if $X$ contains a countable subset $S$ such that $US=X$ for any neighborhood $U\subseteq X$ of the unit;
\item {\em Roelcke separable} if $X$ contains a countable subset $S$ such that $(US)U=X=U(SU)$ for any neighborhood $U\subset X$ of the unit;
\item {\em duoseparable} if $X$ contains a countable subset $S$ such that $S(US)=X=(SU)S$ for any neighborhood $U\subseteq X$ of the unit.
\end{itemize}
\end{definition}

The example of  commutative topological monoid $X=\{0\}\cup [1,\infty)\subset \IR$,  endowed with the operation of addition, shows that in topological monoids the separability is not equivalent to the duo (left, right, or Roelcke) separability.

There exists a simple contruction of non-separable duoseparable unital topologized magmas, which exploits the notions of  absorbing subgroup and absorbing automorphism.

A subgroup $H$ of the automorphism group $\Aut(X)$ of a unital topologized magma $X$ is called {\em absorbing} if $X=\bigcup_{h\in H}h(U)$ for every neighborhood $U\subseteq X$ of the unit. 

For a topologized magma $X$ and a subgroup $H\subseteq \Aut(X)$, endowed with the discrete topology, the {\em semidirect product} $X\rtimes H$ is the product $X\times H$ endowed with the binary operation $*$ defined by
$$(x,f)*(y,g)=(x\cdot f(y),f\circ g).$$

It is known (and easy to check) that the operation $*$ inherits many properties of the binary operation $\cdot$ of $X$: if $\cdot$ is continuous, associative, has a unit or is a group operation, then so is the operation $*$. If $1_X$ is the unit of the magma $X$, then the element $(1_X,1_H)$ is the unit of the magma $X\rtimes H$.

\begin{proposition}\label{p:autoH} Let $X$ be a unital topologized magma and $H$ be a countable subgroup of $\Aut(X)$. If $H$ is absorbing, then the semidirect product $X\rtimes H$ is a duoseparable unital topologized magma.
\end{proposition}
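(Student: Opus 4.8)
The plan is to exhibit an explicit countable witness set $S$ in the semidirect product $X\rtimes H$ and to verify the duoseparability condition by a direct computation. Since $H$ is countable, the natural candidate is
$$S=\{1_X\}\times H=\{(1_X,h):h\in H\},$$
which is countable. I would first observe that, because $H$ carries the discrete topology, every neighborhood of the unit $(1_X,1_H)$ of $X\rtimes H$ contains a basic neighborhood of the form $W=V\times\{1_H\}$, where $V$ is a neighborhood of $1_X$ in $X$. Moreover, it suffices to verify the equalities $S*(W*S)=X\times H=(S*W)*S$ for these basic neighborhoods: if $W\subseteq U$ then $(S*W)*S\subseteq(S*U)*S\subseteq X\times H$, so equality for $W$ forces equality for $U$.

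The heart of the argument is a short computation exploiting two facts stated earlier: the unit of $X$ is a fixed point of every automorphism, so $h(1_X)=1_X$ for all $h\in H$, and the identity automorphism fixes $1_X$ as well. Taking $s=(1_X,h)$, $w=(v,1_H)$, and $s'=(1_X,g)$ with $h,g\in H$ and $v\in V$, I would compute
$$(s*w)*s'=(h(v),h)*(1_X,g)=(h(v)\cdot h(1_X),h\circ g)=(h(v),hg),$$
and likewise
$$s*(w*s')=(1_X,h)*(v,g)=(1_X\cdot h(v),h\circ g)=(h(v),hg).$$
Thus both bracketings yield the same set $(S*W)*S=S*(W*S)=\{(h(v),hg):h,g\in H,\ v\in V\}$. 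This coincidence is precisely what lets the argument survive the absence of associativity in a general magma, and it is the point I would be most careful about: the first coordinate of the right multiplication by $(1_X,g)$ is undisturbed exactly because $h(1_X)=1_X$.

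Finally, I would invoke the absorbing hypothesis to identify this common set with the whole group. Given an arbitrary target $(y,k)\in X\times H$, the absorbing property $X=\bigcup_{h\in H}h(V)$ supplies some $h\in H$ and $v\in V$ with $h(v)=y$; setting $g=h^{-1}k\in H$ then gives $(h(v),hg)=(y,k)$. Hence $(S*W)*S=X\times H=S*(W*S)$, so $S$ witnesses that $X\rtimes H$ is duoseparable. I do not anticipate a genuine obstacle here; the only points requiring attention are the reduction to basic neighborhoods (using discreteness of $H$) and the verification that the two non-associative bracketings collapse to the same set, which hinges on automorphisms fixing the unit.
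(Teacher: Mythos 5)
Your proof is correct and follows essentially the same route as the paper: both take the witness set $S=\{1_X\}\times H$, reduce to a neighborhood of the form $V\times\{1_H\}$ (the paper phrases this as $U\cap X$), verify that both bracketings $(s*w)*s'$ and $s*(w*s')$ collapse to $(h(v),hg)$ using $h(1_X)=1_X$, and then invoke the absorbing property to hit an arbitrary $(y,k)$. Your explicit remark on why the two bracketings agree despite non-associativity is a nice touch, but the substance is identical to the paper's argument.
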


\begin{proof} Identify $X$ and $H$ with the submagmas $X\times\{1_H\}$ and $\{1_X\}\times H$ in $X\rtimes H$. To see that the unital topologized magma  $Y:=X\rtimes H$ is duoseparable, it suffices to show that $Y=(H*U)*H=H*(U*H)$ for any neighborhood $U\subseteq Y$ of the unit. Given any element $(x,h)\in Y$, use the absorbing property of the subgroup $H$ to find $g\in H$ such that $x=g(u)$ for some $u\in U\cap X$. Then
$$((1_X,g)*(u,1_H))*(1_X,g^{-1}h)=(g(u),g)*(1_X,g^{-1}h)=(x,g)*(1_X,g^{-1}h)=(x{\cdot}g(1_X),gg^{-1}h)=(x,h)$$
and 
$$(1_X,g)*((u,1_H)*(1_X,g^{-1}h))=(1_X,g)*(u\cdot 1_H(1_X),1_Hg^{-1}h)=(1_X,g)*(u,g^{-1}h)=(x,h).$$
Consequently, $Y=(H*U)*H=H*(U*H)$.
\end{proof}

An automorphism $\alpha$ of a unital topologized magma $X$ is called {\em absorbing} if the cyclic subgroup $\{\alpha^n\}_{n\in\IZ}$ generated by $\alpha$ in $\Aut(X)$ is absorbing. The latter means that $X=\bigcup_{n\in\IZ}\alpha^n(U)$ for any neighborhood $U$ of the unit in $X$.

Observe that for any topological vector space $X$ the automorphism $\alpha:X\to X$, $\alpha:x\mapsto x+x$, is absorbing.

For an automorphism $\alpha$ of a topologized magma $X$, the {\em semidirect product} $X\rtimes_\alpha\IZ$ is the product $X\times\IZ$ endowed with the binary operation $*$ defined by
$$(x,n)*(y,m)=(x\cdot\alpha^n(y),n+m).$$
Here $\IZ$ is the additive group of integer numbers, endowed with the discrete topology.

It is known (and easy to check) that the operation $*$ inherits many properties of the binary operation $\cdot$ of $X$: if $\cdot$ is continuous, associative, has a unit or is a group operation, then so is the operation $*$. If $1_X$ is the unit of $X$, then the element $(1_X,0)$ is the unit of the magma $X\rtimes_\alpha\IZ$. The following proposition can be proved by analogy with Proposition~\ref{p:autoH}.

\begin{proposition}\label{p:auto} If an automorphism $\alpha$ of a unital topologized magma $X$ is absorbing, then the semidirect product $X\rtimes_\alpha\IZ$ is a duoseparable unital topologized magma.
\end{proposition}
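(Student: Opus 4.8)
The plan is to mimic the proof of Proposition~\ref{p:autoH}, replacing the countable subgroup $H$ by the cyclic group $\{\alpha^n\}_{n\in\IZ}$ and the role of $H$ inside the semidirect product by the copy of $\IZ$. Concretely, identify $X$ with $X\times\{0\}$ and $\IZ$ with $\{1_X\}\times\IZ$ inside $Y:=X\rtimes_\alpha\IZ$, and put $S:=\{1_X\}\times\IZ$. This set $S$ is countable (being indexed by $\IZ$), so it suffices to verify that $Y=(S*U)*S=S*(U*S)$ for every neighborhood $U\subseteq Y$ of the unit $(1_X,0)$.

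First I would record how the absorbing hypothesis enters. Since $\IZ$ carries the discrete topology, any neighborhood $U$ of $(1_X,0)$ in $X\times\IZ$ contains a set of the form $V\times\{0\}$ with $V$ a neighborhood of $1_X$ in $X$; thus $U\cap X$ is (identified with) a neighborhood of $1_X$ in $X$. The absorbing property of $\alpha$ then yields, for each $x\in X$, an integer $n$ and an element $u\in U\cap X$ with $x=\alpha^n(u)$.

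The core of the argument is a direct computation. Fix an arbitrary $(x,k)\in Y$ and choose $n\in\IZ$ and $u\in U\cap X$ with $x=\alpha^n(u)$ as above. Using $\alpha^n(1_X)=1_X$ (automorphisms fix the unit), I would check that
$$\big((1_X,n)*(u,0)\big)*(1_X,k-n)=(\alpha^n(u),n)*(1_X,k-n)=(x,k)$$
and
$$(1_X,n)*\big((u,0)*(1_X,k-n)\big)=(1_X,n)*(u,k-n)=(x,k),$$
where $(1_X,n),(1_X,k-n)\in S$ and $(u,0)\in U$. As $(x,k)$ was arbitrary, this gives $Y=(S*U)*S=S*(U*S)$, so $Y$ is duoseparable.

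The only point that needs care—rather than a genuine obstacle—is that $X$ is merely a magma, so associativity of $*$ is unavailable; consequently the two bracketings $(S*U)*S$ and $S*(U*S)$ must be verified separately, exactly as in Proposition~\ref{p:autoH}. Everything else is bookkeeping with the semidirect-product formula $(x,n)*(y,m)=(x\cdot\alpha^n(y),n+m)$ and the fixed-point identity $\alpha^n(1_X)=1_X$.
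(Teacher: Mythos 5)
Your proof is correct and is exactly the argument the paper intends: the paper only remarks that Proposition~\ref{p:auto} ``can be proved by analogy with Proposition~\ref{p:autoH}'', and your computation with $S=\{1_X\}\times\IZ$, $g=\alpha^n$, and the two separate bracketings is precisely that analogy carried out explicitly. The verification $(x,n)*(1_X,k-n)=(x\cdot\alpha^n(1_X),k)=(x,k)$ and its counterpart for the other bracketing are both right, and you correctly isolate where the absorbing hypothesis and the discreteness of $\IZ$ enter.
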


Next, we show that each unital topologized magma can be enlarged to a unital topologized magma admitting an absorbing automorphism.

The construction of such an extension exploits the famous Hartman-Mycielski construction \cite{BM}, \cite{HM}.
For any topological space $X$, the {\em Hartman-Mycielski space} $\HM(X)$ 
consists of all functions $f:[0,1)\to X$ for which there is a sequence $0=a_0<a_1<\dots<a_n=1$ such that for every $i<n$ the restriction $f{\restriction}[a_i,a_{i+1})$ is constant.

The space $\HM(X)$ carries the topology generated by the subbase consisting of the sets
$$N(U,a,b;\e)=\{f\in\HM(X):\lambda\big([a,b)\setminus f^{-1}(U)\big)<\e\}$$where $U$ is an open set in $X$, $0\le a<b\le 1$, $\e>0$, and $\lambda$ is the Lebesgue measure on $[0,1)$. The construction $\HM$ determines a functor in the category of topological spaces and their continuous maps. To each continuous map $f:X\to Y$ between topological spaces, the functor $\HM$ assigns the map $\HM f:\HM(X)\to\HM(Y)$, $\HM f:\gamma\mapsto f\circ \gamma$.

Any binary operation $\cdot:X\times X\to X$ induces the binary operation $*:\HM(X)\times\HM(X)\to\HM(X)$, $*:(f,g)\mapsto fg$, where $(fg)(t)=f(t)\cdot g(t)$. It can be shown that the (separate) continuity of the binary operation $\cdot$ implies the (separate) continuity of the binary operation $*$. Moreover, if the magma $(X,\cdot)$ has a unit $1_X$, then the constant map $1_{\HM(X)}:\HM(X)\to\{1_X\}\subset X$ is the unit of the magma $(\HM(X),*)$.

It is easy to see that for any continuous homomorphism $h:X\to Y$ between topologized magmas the continuous map $\HM h:\HM(X)\to\HM(Y)$ is a homomorphism between the magmas $\HM(X)$ and $\HM(Y)$. 

Therefore, the Hartman--Mycielski construction determines a functor $\HM$ in the category of (unital) topologized magmas. 

For every unital topologized magma $X$ with unit $1_X$, consider the subspace $$\HM_0(X)=\{f\in\HM(X):f(0)=1_X\}$$in the Hartman--Mycielski space $\HM(X)$. Observe that for any continuous homomorphism $f:X\to Y$ between unital topologized magmas, we have $\HM f(\HM_0(X))\subset\HM_0(Y)$, which allows us to consider $\HM_0$ as a functor $\HM_0:\TM\to\TM$ in the category $\TM$ of unital topologized magmas.

For any unital topologized magma $X$, consider the homomorphism $i_X:X\to\HM_0(X)$ assigning to each $x\in X$ the function $i_x:[0,1)\to\{x,1_X\}\subset X$ such that $i_x^{-1}(1_X)=[0,\frac12)$ and $i_x^{-1}(x)=[\frac12,1)$. It is easy to see that the map $i_X$ is  a topological embedding and the maps $i_X$ compose a natural transformation of the identity functor to the functor $\HM_0$. Therefore, we obtain the following useful fact.

\begin{proposition}\label{p:emb} Every unital topologized magma $X$ is topologically isomorphic to a submagma of the unital topologized magma $\HM_0(X)$.
\end{proposition}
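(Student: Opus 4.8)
The plan is to verify that the map $i_X:X\to\HM_0(X)$, $x\mapsto i_x$, described just before the proposition, is an injective continuous homomorphism which is moreover a topological embedding; its image $i_X(X)$ is then a submagma of $\HM_0(X)$ topologically isomorphic to $X$. I would first dispose of the algebraic part. Since $i_x(t)=1_X$ for $t\in[0,\tfrac12)$ and $i_x(t)=x$ for $t\in[\tfrac12,1)$, evaluation at $t=\tfrac12$ recovers $x$, so $i_X$ is injective. For the multiplication, $(i_x* i_y)(t)=i_x(t)\cdot i_y(t)$ equals $1_X\cdot 1_X=1_X$ on $[0,\tfrac12)$ and $x\cdot y=xy$ on $[\tfrac12,1)$, whence $i_x* i_y=i_{xy}$; and $i_{1_X}$ is the constant function $1_X=1_{\HM(X)}$. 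Thus $i_X$ is a homomorphism of unital topologized magmas.

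The topological content I would extract from a direct computation of $i_X$-preimages and $i_X$-images of the subbasic sets $N(U,a,b;\e)$. The key structural observation is that $i_x$ takes the value $1_X$ on $[0,\tfrac12)$ independently of $x$, and the value $x$ on $[\tfrac12,1)$. Hence, setting $q=\lambda\big([a,b)\cap[\tfrac12,1)\big)$, and letting $m_0=\lambda\big([a,b)\cap[0,\tfrac12)\big)$ when $1_X\notin U$ and $m_0=0$ otherwise, one computes $\lambda\big([a,b)\setminus i_x^{-1}(U)\big)=m_0$ if $x\in U$ and $=m_0+q$ if $x\notin U$. Since the two possible values $m_0$ and $m_0+q$ do not depend on the particular point $x$, the preimage $i_X^{-1}\big(N(U,a,b;\e)\big)$ is $\emptyset$ when $m_0\ge\e$, equals $U$ when $m_0<\e\le m_0+q$, and equals $X$ when $m_0+q<\e$. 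In every case it is open, which establishes the continuity of $i_X$.

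For the embedding it suffices to show that $i_X$ sends open sets to relatively open sets. Given open $W\subseteq X$, I would test it against the single subbasic set $N(W,\tfrac12,1;\tfrac12)$: for $x\in W$ we have $i_x\equiv x\in W$ on $[\tfrac12,1)$, so $\lambda\big([\tfrac12,1)\setminus i_x^{-1}(W)\big)=0<\tfrac12$ and $i_x\in N(W,\tfrac12,1;\tfrac12)$; whereas for $x\notin W$ we have $i_x\equiv x\notin W$ on all of $[\tfrac12,1)$, so this measure equals $\tfrac12\not<\tfrac12$ and $i_x\notin N(W,\tfrac12,1;\tfrac12)$. Therefore $N(W,\tfrac12,1;\tfrac12)\cap i_X(X)=i_X(W)$, so $i_X(W)$ is open in $i_X(X)$. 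Consequently $i_X$ is a continuous open bijection onto its image, i.e. a topological embedding, and the proof is complete. The step requiring the most care is this last one: the interval $[\tfrac12,1)$ and the threshold $\e=\tfrac12$ must be matched so that the subbasic set separates $i_x$ with $x\in W$ from every $i_x$ with $x\notin W$ uniformly, and this works precisely because the ``tail'' on which $i_x$ equals $x$ has length exactly $\tfrac12$.
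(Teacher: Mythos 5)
Your proposal is correct and follows exactly the route the paper intends: the paper defines the same map $i_X$ and simply asserts that it is ``easy to see'' to be a topological embedding and a homomorphism, so your contribution is to supply the omitted verification (the case analysis for the preimages of the subbasic sets $N(U,a,b;\e)$ and the choice of $N(W,\tfrac12,1;\tfrac12)$ to witness openness onto the image), all of which checks out. No issues.
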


Let $s:[0,1)\to[0,1)$ be the homeomorphism of the half-interval $[0,1)$ defined by $s(t)=t^2$. For every unital topologized magma $X$, the homeomorphism $s$ induces the automorphism $$\alpha_X:\HM_0(X)\to\HM_0(X),\;\;\alpha_X:\gamma\mapsto \gamma\circ s,$$ of the unital topologized magma $\HM_0(X)$.

The automorphism $\alpha_X$ is natural in the sense that for any continuous homomoprhism $h:X\to Y$ between unital topologized magmas, the following diagram commutes.
$$
\xymatrix{
\HM_0(X)\ar^{\HM_0 h}[r]\ar_{\alpha_X}[d]&\HM_0(Y)\ar^{\alpha_Y}[d]\\
\HM_0(X)\ar_{\HM_0 h}[r]&\HM_0(Y)\\
}
$$

\begin{proposition}\label{p:alpha} For any unital topologized magma $X$, the automorphism $\alpha_X$ of $\HM_0(X)$ is absorbing.
\end{proposition}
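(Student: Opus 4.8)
The plan is to verify the absorbing property directly: for every neighborhood $W\subseteq\HM_0(X)$ of the unit $\mathbf 1$ (the constant function equal to $1_X$) and every $\gamma\in\HM_0(X)$, I will produce an integer $m$ with $\alpha_X^m(\gamma)\in W$. Putting $n=-m$ then gives $\gamma\in\alpha_X^n(W)$, and since $\gamma$ was arbitrary this yields $\HM_0(X)=\bigcup_{n\in\IZ}\alpha_X^n(W)$, which is precisely the absorbing property of $\alpha_X$.

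First I would reduce to a convenient neighborhood base at $\mathbf 1$. A basic neighborhood of $\mathbf 1$ is a finite intersection $\bigcap_{i}N(U_i,a_i,b_i;\e_i)$ containing $\mathbf 1$. For an index $i$ with $1_X\notin U_i$, the condition $\mathbf 1\in N(U_i,a_i,b_i;\e_i)$ forces $b_i-a_i<\e_i$, so that constraint holds for \emph{every} $f\in\HM_0(X)$ and may be discarded. For the remaining indices (those with $1_X\in U_i$), setting $U=\bigcap_i U_i$ and $\e=\min_i\e_i$, one checks that
$$O(U,\e):=\{f\in\HM_0(X):\lambda(\{t\in[0,1):f(t)\notin U\})<\e\}=N(U,0,1;\e)$$
is contained in the given neighborhood. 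Hence the sets $O(U,\e)$, with $U$ a neighborhood of $1_X$ and $\e>0$, form a neighborhood base at $\mathbf 1$, and it suffices to find $m$ with $\alpha_X^m(\gamma)\in O(U,\e)$.

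Next I would exploit the step-function structure together with the defining condition $\gamma(0)=1_X$. Since $\gamma\in\HM(X)$ is constant on the first interval $[a_0,a_1)=[0,a_1)$ of its defining partition and $\gamma(0)=1_X$, we have $\gamma\equiv 1_X$ on $[0,a_1)$ for some $a_1>0$. Recalling $s(t)=t^2$, we compute $\alpha_X^m(\gamma)(t)=(\gamma\circ s^m)(t)=\gamma\big(t^{2^m}\big)$; as $1_X\in U$, this value lies in $U$ whenever $t^{2^m}<a_1$, i.e.\ whenever $t<a_1^{2^{-m}}$. Consequently $\{t:\alpha_X^m(\gamma)(t)\notin U\}\subseteq[a_1^{2^{-m}},1)$, whose Lebesgue measure is $1-a_1^{2^{-m}}$.

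Finally, the measure estimate closes the argument: since $a_1>0$, we have $a_1^{2^{-m}}\to 1$ as $m\to\infty$, so $1-a_1^{2^{-m}}<\e$ for all sufficiently large $m$, giving $\alpha_X^m(\gamma)\in O(U,\e)\subseteq W$. The conceptual heart of the proof — and the place where the specific choice $s(t)=t^2$ matters — is exactly this last point: iterating $s$ drags almost all of $[0,1)$ into the initial segment $[0,a_1)$ on which $\gamma$ is trivial, so every $\gamma$ is pushed arbitrarily close to the unit. The only mildly technical step is the reduction to the base $\{O(U,\e)\}$, which I expect to be the main (though essentially routine) obstacle.
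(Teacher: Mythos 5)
Your proof is correct and follows essentially the same route as the paper's: reduce to a basic neighborhood of the form $\{f:\lambda(f^{-1}(U))>1-\e\}$, use that any $\gamma\in\HM_0(X)$ equals $1_X$ on an initial interval $[0,a_1)$, and observe that iterating $s(t)=t^2$ pushes all of $[0,1)$ except a set of measure less than $\e$ into that interval. The only difference is that you spell out the reduction to the neighborhood base in more detail than the paper, which simply asserts it.
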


\begin{proof} Let $U$ be any neighborhood of the unit in $\HM_0(X)$. By the definition of the topology on $\HM(X)$, there exists a neighborhood $V_1$ of the unit $1_X$ in $X$ and $\e>0$ such that $\{f\in HM_0(X):\lambda(f^{-1}(V_1))>1-\e\}\subseteq U$. Given any $f\in\HM_0(X)$, we should find $n\in\IZ$ such that the function $\alpha_X^n(f)=f\circ s^n$ belongs to the neighborhood $U$. Since $f\in\HM_0(X)$, there exits a positive real number $b<1$ such that $f([0,b))=\{1_X\}\subseteq V_1$. Find $n\in\IN$ such that $(1-\e)^n<b$. Then for any $t\in[0,1-\e)$, we have $f\circ s^n(t)\in f([0,b))\subseteq V_1$ and hence the set $(f\circ s^n)^{-1}(V_1)\supseteq [0,1-\e)$ has Lebesgue measure $>1-\e$. Then $f\circ s^n\in U$.
\end{proof}

Propositions~\ref{p:auto}, \ref{p:emb} and \ref{p:alpha} imply our main result:

\begin{theorem}\label{t:main} For any unital topologized magma $X$, the semidirect product $\HM_0(X)\rtimes_{\alpha_X}\IZ$ is a duoseparable unital topologized magma, containing an isomorphic copy of $X$. 
\end{theorem}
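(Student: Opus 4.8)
The plan is to assemble the theorem directly from the three preceding propositions, since each of its two assertions—duoseparability and the presence of an isomorphic copy of $X$—is by now a formal consequence of results already established. No new construction is needed; the work lies entirely in chaining the functorial ingredients together.

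First I would settle duoseparability. By Proposition~\ref{p:alpha}, the automorphism $\alpha_X$ of the unital topologized magma $\HM_0(X)$ is absorbing. Applying Proposition~\ref{p:auto} with $\HM_0(X)$ in the role of $X$ and $\alpha_X$ in the role of $\alpha$ then yields at once that the semidirect product $\HM_0(X)\rtimes_{\alpha_X}\IZ$ is a duoseparable unital topologized magma. That this semidirect product is indeed a unital topologized magma, with unit $(1_{\HM_0(X)},0)$, is guaranteed by the inheritance properties of the operation $*$ recorded just before Proposition~\ref{p:auto}.

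It remains to exhibit an isomorphic copy of $X$, for which I would compose two embeddings. By Proposition~\ref{p:emb}, the natural map $i_X\colon X\to\HM_0(X)$ is a topological isomorphism onto a submagma of $\HM_0(X)$. In turn, the map $j\colon\HM_0(X)\to\HM_0(X)\rtimes_{\alpha_X}\IZ$, $j\colon\gamma\mapsto(\gamma,0)$, is a continuous homomorphism, since $(\gamma,0)*(\delta,0)=(\gamma\cdot\alpha_X^0(\delta),0)=(\gamma\cdot\delta,0)$ and $\alpha_X^0$ is the identity; moreover $j$ is a topological embedding because $\{0\}$ is clopen in the discrete group $\IZ$, so that its image $\HM_0(X)\times\{0\}$ carries exactly the subspace topology of a single factor. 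The composition $j\circ i_X$ is therefore a topological isomorphism of $X$ onto a submagma of $\HM_0(X)\rtimes_{\alpha_X}\IZ$.

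Because the two assertions are thus fully reduced to the cited propositions, there is no genuinely hard step here; the reduction is the point. The only place demanding a little care is the second embedding $j$, where one must check both that it respects the twisted multiplication—which it does precisely because the $\IZ$-coordinate is $0$, so the twist $\alpha_X^n$ disappears—and that it is a topological, not merely algebraic, embedding, which relies on the discreteness of $\IZ$.
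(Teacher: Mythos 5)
Your proof is correct and follows exactly the route the paper takes: the paper derives Theorem~\ref{t:main} by simply citing Propositions~\ref{p:auto}, \ref{p:emb} and \ref{p:alpha}, and your write-up just makes explicit the chaining of these results together with the routine verification that $\gamma\mapsto(\gamma,0)$ is a topological embedding of $\HM_0(X)$ into the semidirect product. Nothing is missing.
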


If $X$ is a topological group, then so is $\HM_0(X)$ and $\HM_0(X)\rtimes_{\alpha_X}\IZ$. This observation, combined with Theorem~\ref{t:main},
implies the following corollaries.

\begin{corollary} For any topological group $X$, the topological group $\HM_0(X)\rtimes_{\alpha_X}\IZ$ is duoseparable and contains an isomorphic topological copy of $X$.
\end{corollary}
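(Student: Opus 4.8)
The plan is to derive this corollary directly from Theorem~\ref{t:main} by checking that each step of the construction preserves the group structure. Theorem~\ref{t:main} already guarantees that $\HM_0(X)\rtimes_{\alpha_X}\IZ$ is a duoseparable unital topologized magma containing an isomorphic copy of $X$, so the only remaining task is to verify that when $X$ is a topological group, this magma is in fact a topological group and that the embedding of $X$ respects inverses.

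First I would recall that for a topological group $X$ the Hartman--Mycielski space $\HM(X)$ is itself a topological group under the pointwise operation $(fg)(t)=f(t)\cdot g(t)$. Associativity, the unit $1_{\HM(X)}$ (the constant function with value $1_X$), and the inversion $f\mapsto f^{-1}$, where $f^{-1}(t)=f(t)^{-1}$, are all inherited pointwise from $X$; the continuity of multiplication and inversion in the Hartman--Mycielski topology is the standard (and known) feature of this construction. Then $\HM_0(X)=\{f\in\HM(X):f(0)=1_X\}$ is a subgroup of $\HM(X)$, since the condition $f(0)=1_X$ is preserved by pointwise products and pointwise inverses; with the subspace topology it is a topological group.

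Next I would invoke the already-noted fact that the semidirect-product operation $*$ inherits the group-operation property. Since $\alpha_X$ is an automorphism of the topological group $\HM_0(X)$ and $\IZ$ carries the discrete topology, the product $\HM_0(X)\rtimes_{\alpha_X}\IZ$ is a topological group, with inversion given by $(f,n)^{-1}=(\alpha_X^{-n}(f^{-1}),-n)$ (a two-sided inverse because $\alpha_X$ fixes the unit). Combined with Theorem~\ref{t:main}, this yields that $\HM_0(X)\rtimes_{\alpha_X}\IZ$ is a duoseparable topological group. Finally, the map $x\mapsto(i_X(x),0)$ is the composition of the topological embedding $i_X:X\to\HM_0(X)$ with the inclusion $f\mapsto(f,0)$ of $\HM_0(X)$ into the semidirect product; as $i_X$ is a continuous homomorphism of groups and a topological embedding, its image is a subgroup topologically isomorphic to $X$, providing the desired isomorphic topological copy.

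I do not anticipate any serious obstacle, since all of the substantive content---duoseparability, the absorbing property of $\alpha_X$, and the embedding of $X$---is supplied by Theorem~\ref{t:main} and the propositions preceding it. The only genuinely new point is the continuity of the pointwise group operations on $\HM(X)$ with respect to the Hartman--Mycielski topology, and this is precisely the classical property of the construction; everything else reduces to a routine verification that the subgroup $\HM_0(X)$ and the semidirect product preserve the group axioms.
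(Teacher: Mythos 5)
Your proposal is correct and follows exactly the paper's route: the paper likewise derives the corollary from Theorem~\ref{t:main} together with the observation that for a topological group $X$ both $\HM_0(X)$ and $\HM_0(X)\rtimes_{\alpha_X}\IZ$ are topological groups. You merely spell out the routine verifications (pointwise inverses, the semidirect-product inversion formula, the composite embedding $x\mapsto(i_X(x),0)$) that the paper leaves implicit.
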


\begin{corollary}\label{c:main} Every topological group is topologically isomorphic to a subgroup of a duoseparable topological group.
\end{corollary}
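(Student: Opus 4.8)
The plan is to obtain the corollary as a direct specialization of Theorem~\ref{t:main} to the case where $X$ is a topological group, relying on the fact that the functor $\HM_0(-)\rtimes_{\alpha_{(-)}}\IZ$ carries topological groups to topological groups. Concretely, given a topological group $X$, I would set $FX:=\HM_0(X)\rtimes_{\alpha_X}\IZ$ and argue that (i) $FX$ is a topological group, (ii) $FX$ is duoseparable, and (iii) $X$ embeds into $FX$ as a topological subgroup. Steps (ii) and (iii) are already packaged in Theorem~\ref{t:main}, so the only genuinely new point is (i).

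To verify step (i), I would observe that since $X$ is a topological group, its multiplication and inversion are continuous; the pointwise operation $*$ on $\HM(X)$ inherits continuity, and the pointwise inverse of a step function is again a step function, so $\HM(X)$ is a topological group. The subspace $\HM_0(X)=\{f:f(0)=1_X\}$ is exactly the kernel of the continuous homomorphism $\HM(X)\to X$, $f\mapsto f(0)$, hence a topological subgroup, and thus itself a topological group. Finally, $\alpha_X$ is a topological automorphism of the group $\HM_0(X)$, so the semidirect product $\HM_0(X)\rtimes_{\alpha_X}\IZ$ is a topological group, as already noted in the text.

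With (i) in hand, step (ii) is immediate: $\alpha_X$ is absorbing by Proposition~\ref{p:alpha}, so Proposition~\ref{p:auto} applied in the group setting yields that $FX$ is duoseparable. For step (iii), Proposition~\ref{p:emb} supplies a topological embedding $i_X:X\to\HM_0(X)$ that is a homomorphism; composing it with the canonical embedding $\HM_0(X)\to FX$, $y\mapsto(y,0)$, which is clearly a topological embedding and a homomorphism (since $(y,0)*(z,0)=(y\cdot z,0)$), exhibits $X$ as a subgroup of $FX$.

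I do not expect a real obstacle here, since the substantive work—building the absorbing automorphism through the Hartman--Mycielski functor and deducing duoseparability of the semidirect product—was already carried out in Propositions~\ref{p:auto}, \ref{p:emb}, and \ref{p:alpha}. The only point requiring a moment's care is confirming that every construction in the chain ($\HM$, passage to $\HM_0$, the twist by $\alpha_X$, and the semidirect product with $\IZ$) preserves the full group structure, and in particular continuity of inversion, so that the resulting object $FX$ is a topological group rather than merely a unital topological magma.
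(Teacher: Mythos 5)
Your proposal is correct and follows exactly the paper's route: the paper derives this corollary from Theorem~\ref{t:main} together with the observation that $\HM_0(X)$ and $\HM_0(X)\rtimes_{\alpha_X}\IZ$ are topological groups whenever $X$ is. One small inaccuracy: evaluation $f\mapsto f(0)$ is \emph{not} continuous on $\HM(X)$ (a function can be altered near $0$ without leaving any basic neighborhood, and indeed $\HM_0(X)$ is dense rather than closed in $\HM(X)$), so $\HM_0(X)$ is not the kernel of a continuous homomorphism; it is, however, an algebraic subgroup, and any subgroup of a topological group is a topological group in the subspace topology, so your conclusion stands.
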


It is easy to see that an abelian commutative topological group is separable if and only if it is duoseparable, so we cannot embed any abelian topological group into a duoseparable abelian topological group. 
\smallskip

A topological group $G$ is called {\em abelian-by-countable} (resp. {\em abelian-by-cyclic}) if $G$ contains a closed normal subgroup $H$ such that the quotient group $G/H$ is countable (resp. cyclic). Observing that for any abelian topological group $X$, the topological group $\HM_0(X)$ is abelian and $\HM_0(X)\rtimes_{\alpha_X}\IZ$ is abelian-by-cyclic, we obtain the following embedding result.

\begin{theorem} Every abelian topological group is  topologically isomorphic to a subgroup of a duoseparable abelian-by-cyclic topological group.
\end{theorem}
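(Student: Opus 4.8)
The plan is to reduce this final theorem to the machinery already assembled in the paper, since almost all the work has been done.

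\smallskip

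First, I would start with an arbitrary abelian topological group $X$ and form the group $G:=\HM_0(X)\rtimes_{\alpha_X}\IZ$, exactly as in Theorem~\ref{t:main}. By that theorem, $G$ is a duoseparable unital topologized magma containing an isomorphic copy of $X$; and since $X$ is in particular a topological group, the remarks preceding the corollaries guarantee that $\HM_0(X)$ is a topological group and hence so is the semidirect product $G$. Thus the embedding and the duoseparability come for free, and the only genuinely new content is the structural claim that $G$ is \emph{abelian-by-cyclic}.

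\smallskip

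Second, I would verify that $\HM_0(X)$ is abelian when $X$ is abelian. The pointwise operation $(fg)(t)=f(t)\cdot g(t)$ inherits commutativity directly from $X$, so $\HM(X)$ is abelian, and $\HM_0(X)$ is a subgroup, hence abelian as well. Now I take $H:=\HM_0(X)\times\{0\}$ inside $G$. I would check that $H$ is a closed normal subgroup: it is the kernel of the projection $\pr:G\to\IZ$, $(\gamma,n)\mapsto n$, onto the (discrete) second factor, which is a continuous homomorphism by the definition of the operation $*$; being the preimage of the open-and-closed point $\{0\}\subseteq\IZ$, the subgroup $H$ is closed. The quotient $G/H$ is therefore isomorphic to $\IZ$, which is cyclic. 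Since $H\cong\HM_0(X)$ is abelian, $G$ is abelian-by-cyclic by definition.

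\smallskip

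The main point to be careful about—though it is routine rather than hard—is confirming that $\pr$ really is a continuous homomorphism and that the identification $G/H\cong\IZ$ is correct, i.e. that $H$ is exactly the kernel and that the projection is surjective and open onto the discrete group $\IZ$; these follow immediately from the formula $(x,n)*(y,m)=(x\cdot\alpha_X^n(y),n+m)$, which shows the second coordinate behaves additively and independently of the first. Assembling these observations yields that $G$ is a duoseparable abelian-by-cyclic topological group containing an isomorphic topological copy of $X$, which is the assertion of the theorem. No step presents a real obstacle, since the heavy lifting was already accomplished in Propositions~\ref{p:auto}, \ref{p:emb}, \ref{p:alpha} and Theorem~\ref{t:main}.
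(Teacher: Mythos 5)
Your proposal is correct and follows exactly the paper's route: the authors prove this theorem by the one-line observation that for abelian $X$ the group $\HM_0(X)$ is abelian (pointwise operation) and $\HM_0(X)\rtimes_{\alpha_X}\IZ$ is abelian-by-cyclic, with duoseparability and the embedding supplied by Theorem~\ref{t:main}. You simply spell out the routine verifications (kernel of the projection to $\IZ$, closedness, the quotient being $\IZ$) that the paper leaves implicit.
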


\begin{problem}\label{prob:emb} Can any each compact topological group be embedded into a duoseparable locally compact topological group?
\end{problem}

Let us observe that for compact topological groups the duoseparability is equivalent to the separability.

\begin{proposition} A compact topological group is duoseparable if and only if it is separable.
\end{proposition}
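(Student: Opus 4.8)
The plan is to prove the two implications separately, with the direction ``separable $\Ra$ duoseparable'' being routine and the converse carrying all the content. For the easy direction I would invoke the equivalence $(1)\LRa(2)$ of Theorem~\ref{t1} to obtain a countable set $S$ with $SU=G$ for every neighborhood $U$ of the unit; adjoining $1_G$ to $S$ if necessary (which preserves $SU=G$) we may assume $1_G\in S$. Then $G=SU=SU\{1_G\}\subseteq SUS\subseteq G$, and by associativity $S(US)=(SU)S=SUS=G$, so $S$ witnesses duoseparability. Note that this half uses neither compactness nor commutativity.

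For the converse, let $S$ be a countable set witnessing duoseparability, so that $SUS=G$ for every neighborhood $U$ of $1_G$. The idea is to show that the countable subgroup $\langle S\rangle$ is dense, which immediately yields separability. Put $D=\overline{\langle S\rangle}$, a closed, hence compact, subgroup of $G$. Since $S\subseteq D$, the hypothesis gives $DUD=G$ for every neighborhood $U$ of $1_G$, and it remains to deduce $D=G$.

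The decisive step is to pass to the double coset space $D\backslash G/D$. As $D$ is a compact subgroup, the coset space $D\backslash G$ is compact Hausdorff and the quotient map $G\to D\backslash G$ is open; factoring out the continuous right action of the compact group $D$ on $D\backslash G$, the orbit space $D\backslash G/D$ is again compact Hausdorff (the orbit space of a compact Hausdorff space under a compact group action is Hausdorff, as the orbit relation is closed and the orbit map is open). Let $q\colon G\to D\backslash G/D$, $q\colon g\mapsto DgD$, be the resulting open continuous surjection and put $e^{*}=q(1_G)$. Because $q$ is open, $\{q(U):U\text{ an open neighborhood of }1_G\}$ is a neighborhood base at $e^{*}$: for open $W\ni e^{*}$ the set $q^{-1}(W)\supseteq D\ni 1_G$ is open, hence contains some $U$, whence $e^{*}\in q(U)\subseteq W$. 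But $q^{-1}(q(U))=DUD=G$ forces $q(U)$ to be the whole space for every such $U$, so $D\backslash G/D$ is the only open neighborhood of $e^{*}$. In a Hausdorff space this is possible only if the space is a single point, i.e. $G=D$.

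Thus $\langle S\rangle$ is dense in $G$; being the subgroup generated by a countable set it is itself countable, so $G$ is separable. The main obstacle is exactly the non-abelian case treated in the third paragraph: when $G$ is abelian one checks directly that $SS$ is dense, but in general one must control the two-sided product $DUD$, and the cleanest way I see to do so is via the Hausdorffness of the double coset space, which is the one point where compactness is genuinely used.
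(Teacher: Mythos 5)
Your proof is correct. Both you and the paper reduce the nontrivial direction to showing that the closure $D$ of the countable subgroup generated by the witness $S$ equals $G$, but the decisive compactness step is executed differently. The paper argues directly: if $x\in G\setminus D$, then since $D\cdot 1_G\cdot D=D$ is compact and lies in the open set $G\setminus\{x\}$, a Wallace-lemma argument produces a neighborhood $U$ of the unit with $DUD\subseteq G\setminus\{x\}$, contradicting $SUS=G$. You instead pass to the double coset space $D\backslash G/D$, verify that it is Hausdorff (the orbit relation is a continuous image of the compact set $D\times G\times D$, hence closed, and the quotient map is open because $q^{-1}(q(U))=DUD$), and observe that $DUD=G$ leaves the basepoint with no proper open neighborhood, forcing the space to be a singleton. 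The two arguments exploit compactness of $D$ in essentially equivalent ways, but yours routes it through the standard (and somewhat heavier) machinery of Hausdorff orbit spaces, whereas the paper's is a one-line application of compactness to the multiplication map; your version has the mild advantage of making explicit that only compactness of $D$ and Hausdorffness of $G$ are used, and your treatment of the easy direction via Theorem~\ref{t1} and adjoining the unit to $S$ matches what the paper leaves to the reader as ``trivial.''
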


\begin{proof} The ``if'' part is trivial. To prove the ``only if'' part, assume that $G$ is duoseparable and find a countable subgroup $S\subseteq G$ such that $G=SUS$ for any neighborhood $U\subseteq G$ of the unit. We claim that the subgroup $S$ is dense in $G$ and hence $G$ is separable. In the opposite case, the closure $\bar S$ of $S$ is a proper closed subgroup of $G$. Fix any point $x\in G\setminus\bar S$ and using the compactness of the set $\bar S=\bar S\cdot 1_G\cdot\bar S$, find a neighborhood $U\subseteq G$ of $1_G$ such that $\bar S\cdot U\cdot\bar S\subseteq G\setminus\{x\}$. Then $SUS\subseteq \bar SU\bar S$ cannot be equal to $G$, which is a desired contradiction.
\end{proof}

\begin{proposition} Each duoseparable locally compact topological group $G$ is $\sigma$-compact.
\end{proposition}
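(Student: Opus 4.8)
The plan is to exploit local compactness to produce a single relatively compact neighborhood of the unit and then feed that neighborhood into the duoseparability condition. First I would use the local compactness of $G$ to fix an open neighborhood $U$ of the unit $1_G$ whose closure $\bar U$ is compact. Next, since $G$ is duoseparable, there is a countable set $S\subseteq G$ such that $G=SUS$ for every neighborhood $U\subseteq G$ of the unit; specializing this identity to the chosen relatively compact $U$ gives $G=SUS$.

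The second step is to unpack $SUS$ as the union $SUS=\bigcup_{s,t\in S}sUt$. Because left and right translations are homeomorphisms of $G$, the closure of each set $sUt$ equals $s\bar U t$, which is compact as a two-sided translate of the compact set $\bar U$. Consequently
$$G=\bigcup_{s,t\in S}sUt\subseteq\bigcup_{s,t\in S}s\bar U t,$$
and since $S\times S$ is countable, the right-hand side is a countable union of compact sets. Therefore $G$ is $\sigma$-compact.

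I do not expect a genuine obstacle here: the argument is a direct combination of local compactness with the countability of the set $S$. The only point that requires a moment's attention is recognizing that the defining property of duoseparability need only be invoked for the one specific neighborhood $U$ with compact closure, rather than for all neighborhoods simultaneously, and that passing from $sUt$ to its compact closure $s\bar U t$ is justified by translation invariance.
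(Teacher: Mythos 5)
Your argument is correct and coincides with the paper's own proof: both fix a neighborhood $U$ of the unit with compact closure, invoke the duoseparability witness $S$ to get $G=SUS\subseteq\bigcup_{s,t\in S}s\overline{U}t$, and conclude $\sigma$-compactness from the countability of $S\times S$ and the compactness of the translates $s\overline{U}t$. No changes are needed.
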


\begin{proof} The duoseparability of $G$ yields a countable subset $S\subseteq G$ such that $G=SUS$ for any neighborhood $U$ of the unit in $G$. By the local compactness, the unit $1_G$ has a neighborhood $U$ with compact closure $\overline{U}$ in $G$. Then the space $G=S\overline US=\bigcup_{x,y\in S}x\overline Uy$ is $\sigma$-compact.
\end{proof}

The following theorem resolves the ``abelian'' case of Problem~\ref{prob:emb}.

\begin{theorem}\label{t:main2} Each $\sigma$-compact locally compact abelian topological group  is topologically isomorphic to a subgroup of a duoseparable abelian-by-countable locally compact topological group.
\end{theorem}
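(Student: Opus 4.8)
The plan is to reduce the whole statement to Proposition~\ref{p:autoH}: it suffices to embed the given group $X$ into a (possibly larger) locally compact abelian group $Y$ that carries a \emph{countable absorbing} subgroup $H\subseteq\Aut(Y)$. Indeed, granting such $Y$ and $H$, the semidirect product $Y\rtimes H$ is duoseparable by Proposition~\ref{p:autoH}; it is locally compact because $Y$ is locally compact and $H$, being countable, is discrete; and it is abelian-by-countable because the abelian group $Y\cong Y\times\{1_H\}$ is a closed normal subgroup with countable quotient $H$. Since $X\hookrightarrow Y\hookrightarrow Y\rtimes H$, this $Y\rtimes H$ is the desired overgroup. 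So the problem becomes: construct, for each $\sigma$-compact locally compact abelian $X$, an embedding into a locally compact abelian group admitting a countable absorbing automorphism group.

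For this I would exploit contracting automorphisms of the elementary building blocks. The two models are $\IR$ with the absorbing automorphism $x\mapsto x/2$, and $\mathbb Q_p$ with the absorbing automorphism $x\mapsto px$ (note $p^n x\to 0$ for every $x$). First I would use the structure theory of locally compact abelian groups to split off an open subgroup $\IR^n\times K$ with $K$ compact and countable quotient, and then embed the pieces into products (restricted where necessary to preserve local compactness) of copies of $\IR$ and of groups of $\mathbb Q_p$-type. The key to keeping $H$ countable is the trick already visible in the adele ring $\mathbb A$: although each coordinate needs its own scaling, one may use a single generator scaling all real coordinates simultaneously and, for each prime $p$, a single generator scaling all $\mathbb Q_p$-coordinates simultaneously; the resulting $H\cong\IZ^{(\omega)}$ is countable, and since every element of a restricted product lies in the compact part at all but finitely many coordinates, a single element of $H$ carries it into any prescribed neighbourhood of $0$. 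This disposes of the $\IR$-part, the $p$-adic parts, and, through shift automorphisms on restricted products assembled from finite layers, the metrizable totally disconnected compact part.

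The main obstacle is the compact part of $X$, and specifically its connected compact summand, since a nontrivial compact group admits no contracting automorphism of its own (automorphisms preserve Haar measure; e.g. $\Aut(\IT)=\{\pm1\}$). The way around this is to embed the connected compact part not rigidly but into an \emph{adele-class solenoid}: the group $\mathbb A/\mathbb Q=\widehat{\mathbb Q}$ is a compact connected abelian group which, as an abstract group, is a divisible torsion-free $\mathbb Q$-vector space, and on it the countable group $\mathbb Q^{\times}$ of scalar multiplications acts by topological automorphisms. I claim this action is absorbing. Given $\bar x\in\mathbb A/\mathbb Q$, the decomposition $\mathbb A_f=\mathbb Q+\widehat{\IZ}$ of the finite adeles lets one choose a lift of the form $(t,u)\in\IR\times\widehat{\IZ}$; multiplying by a suitable integer $a$ and subtracting a suitable integer then puts $a\bar x$ into any prescribed neighbourhood of $0$, the congruence conditions at the finitely many relevant primes fixing the residue of the subtracted integer and the density of $\{at\bmod 1\}$ (for $t\notin\mathbb Q$) handling the real coordinate. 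Passing to powers $(\mathbb A/\mathbb Q)^{\kappa}=\widehat{\mathbb Q^{(\kappa)}}$ with the diagonal $\mathbb Q^{\times}$-action, which stays absorbing because a nonzero element of a torsion-free compact group generates an infinite, hence perfect, closed subgroup that accumulates at $0$, one can absorb every compact connected group whose dual is divisible.

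Finally I would assemble the pieces into a single locally compact abelian $Y\supseteq X$ and let $H$ be generated by all the scaling and shift automorphisms above; $H$ is countable and coordinatewise absorbing, hence absorbing on $Y$, so Proposition~\ref{p:autoH} delivers the duoseparable abelian-by-countable locally compact group $Y\rtimes H$. The two delicate points I expect to fight with are (i) arranging the embedding of the compact part so that $Y$ stays locally compact, which forces the successive compact subgroups used in any shift construction to have finite index, so that the profinite part must be presented through finite layers; and (ii) reconciling the non-divisible connected-compact factors such as $\IT^{\kappa}$, whose dual $\IZ^{(\kappa)}$ is not a quotient of a $\mathbb Q$-vector space and hence does not embed into a power of $\mathbb A/\mathbb Q$. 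For the latter I would reduce to a countable set of coordinates, observing that a countable family of character conditions constrains only countably many coordinates and leaves the rest free, thereby bringing the non-separable case back to the separable (countable-weight) one, where the constructions above apply.
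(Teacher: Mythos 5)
Your global strategy is exactly the paper's: reduce everything, via the structure theory of LCA groups, to producing a countable \emph{absorbing} subgroup $H\subseteq\Aut(Y)$ for some locally compact abelian $Y\supseteq X$, and then invoke Proposition~\ref{p:autoH} to get the duoseparable abelian-by-countable group $Y\rtimes H$. The splitting into $\IR^n\times K\times(\text{countable discrete})$ is also the paper's Lemma~\ref{l:final2}. But the heart of the theorem is the compact factor $K$, and there your argument has a genuine gap. Every compact abelian group embeds into $\IT^\kappa$, and that is the only general target available; a power of the adele class group $(\mathbb{A}/\mathbb{Q})^\kappa$ has dual $\mathbb{Q}^{(\kappa)}$, so a compact $K$ embeds into it only when $\widehat K$ is divisible, i.e.\ $K$ is torsion-free. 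Your solenoid construction (which, as far as it goes, is correct: the diagonal action of $\mathbb{Q}^\times$ on $(\mathbb{A}/\mathbb{Q})^\kappa$ is absorbing, since a nonzero tuple in a torsion-free compact group generates a non-discrete monothetic closed subgroup) therefore never reaches $\IT^\kappa$, which you acknowledge in your point (ii). The proposed repair --- ``reduce to a countable set of coordinates \dots bringing the non-separable case back to the separable one'' --- is not an argument: a separable group is already duoseparable, so the theorem is only nontrivial precisely when $K$ has uncountably many (indeed more than $\mathfrak c$) independent coordinates, and no countable subfamily of coordinates controls an arbitrary point of $\IT^\kappa$. The absorbing condition quantifies over \emph{all} points $x$ and only \emph{finitely many} coordinates of the neighborhood $U$, so the countable family $H$ must move every finite tuple of elements of the fiber group into every neighborhood of zero; coordinatewise scalings cannot do this on $\IT$ because $\Aut(\IT)=\{\pm1\}$.

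What is missing is the paper's actual core: Lemmas~\ref{l:SL}--\ref{l:Ak}. One shows (by a pigeonhole count of lattice vectors, producing a matrix in $SL(2n,\IZ)$) that automorphisms of $\IT^\w$ induced by $GL$-type integer matrices can squeeze any finite subset of $\IT^\w$ into any neighborhood of zero; a compactness argument then extracts a \emph{countable} family of such automorphisms that works uniformly; finally the diagonal action of this family on $(\IT^\w)^\kappa=\IT^\kappa$ gives the countable absorbing subgroup of $\Aut(\IT^\kappa)$ for arbitrary $\kappa$. The essential point your proposal misses is that the absorbing automorphisms of $\IT^\kappa$ must \emph{mix} coordinates (within blocks of size $\w$), not merely rescale them; once that is supplied, your reduction goes through verbatim.
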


\begin{proof} We divide the proof of Theorem~\ref{t:main2} into a series of lemmas.

\begin{lemma}\label{l:SL} For any $n\in\IN$, $\e>0$, and vectors $x_1,\dots,x_n\in\IR^{2n}$, there exists a matrix $A\in SL(2n,\IZ)$ such that $x_iA\in [-\e,\e]^n\times\IR^n$ for every $i\in\{1,\dots,n\}$.
\end{lemma}

\begin{proof}  Let $m=2n$ and for every $i\in\{1,\dots,n\}$ write the vector $x_i\in\IR^{2n}=\IR^m$ in coordinates as $x_i=(x_{i1},\dots,x_{im})$. Consider the $n\times m$ matrix $X=(x_{ij})$. A column
of a matrix will be called {\em small} if all its entries have absolute value at most $\varepsilon$. Let $k$ be the maximal number of small columns in a matrix $XA$, where $A\in SL(m,\mathbb Z)$. Fix a matrix $A\in SL(m,\mathbb Z)$ such that $XA$ has $k$ many small columns.
Composing $A$ with a suitable coordinate permuting matrix, we can additionally assume that the small columns of the matrix $XA$ coincide with its first $k$ columns.

We claim that $k\ge n$. To derive a contradiction, assume that $k<n$ and hence $l:=m-k>n$. Let $y_1,\dots,y_l$ be the last $l$ columns of the matrix $XA$. After transposition, each column $y_i$ becames a row $y_i^T$ that belongs to the space $\IR^n$.

 Pick a positive integer $M$ such that the maximal absolute value of any element of the matrix $XA$ does not exceed $M\varepsilon$. Pick an arbitrary positive integer $K>(2lM)^n$ and consider the map $$f:\{-K,\dots,K\}^l\to \IR^n,\;f:(d_1,\dots,d_n)\mapsto \sum_{i=1}^ld_iy_i^T.$$  Since each $l$-tuple $d=(d_1,\dots,d_l)\in\{-K,\dots,K\}^l$ has  $\max_{i}|d_i|\le K$ and all entries of columns $y_i$ have absolute value at most $M\varepsilon$, each coordinate of the vector $f(d)$ is at most $lKM\varepsilon$. Therefore the image $f(\{-K,\dots,K\}^l)$ can be covered by $(2lKM)^n$ closed axis-parallel cubes with side $\varepsilon$. Since $|\{-K,\dots,K\}^l|=(2K+1)^l>(2K)^{n+1}>(2lKM)^n$, there exist two distinct elements $d'$ and $d''$ of  $\{-K,\dots,K\}^l$ such that each coordinate of the vector $y=f(d')-f(d'')=f(d'-d'')$ has absolute value at most $\varepsilon$. Put $d=d'-d''$. Dividing the elements of the $l$-tuple $d$ by their greatest common divisor, if needed, we can assume that the greatest common divisor of the entries of $d$ is $1$. By \cite{She}\footnote{See also {\tt https://kconrad.math.uconn.edu/blurbs/ringtheory/primvector.pdf}}, there exists a matrix $D\in SL(l,\mathbb Z)$ whose last column coincides with $d^T$. Put $B=\begin{pmatrix} I & 0\\ 0 & D\end{pmatrix}$, where $I$ is the $k\times k$ identity matrix. Then the last column of the matrix $XAB$ is small, whereas its first $k$ columns are the same as in the matrix $XA$ and hence are small, too. Consequently, the matrix $XAB$ has more than $k$ small columns, which  contradicts the definition of the number $k$. This contradiction shows that $k\ge n$ and hence $x_iA\subset[-\e,\e]^k\times\IR^{n-k}\subseteq[-\e,\e]^n\times\IR^n$ for all $i\in\{1,\dots,n\}$.
\end{proof}

Let $\IT=\IR/\IZ$ denote the torus group.

\begin{lemma}\label{l:Talpha} For any finite set $F\subset \IT^\w$ and any neighborhood $U\subseteq \IT^\w$ of zero there exists an automorphism $\alpha$ of $\IT^\w$ such that $\alpha(F)\subset U$.
\end{lemma}

\begin{proof} Consider the quotient homomorphism $q:\IR^\w\to\IR^\w/\IZ^\w=\IT^\w$ and observe that $q^{-1}(U)$ is a neighborhood of zero in $\IR^\w$. Choose a finite set $F'\subset\IR^\w$ of cardinality $|F'|=|F|$ such that $q(F')=F$.

By definition of the Tychonoff product topology on $\IR^\w$, there exists a finite set $D\subset\w$ and $\e>0$ such that $\pr_D^{-1}([-\e,\e]^D)\subset q^{-1}(U)$, where $\pr_D:\IR^\w\to\IR^D$, $\pr_D:x\mapsto x{\restriction}D$, stands for the natural projection. Replacing $D$ by a larger set, we can assume that $|D|\ge |F'|$.
By Lemma~\ref{l:SL},  there exists an automorphism $A$ of $\IR^\w$ such that $A(\IZ^\w)=\IZ^\w$ and $A(F')\subset \pr_D^{-1}([-\e,\e]^D)\subset q^{-1}(U)$.  Since $A(\IZ^\w)=\IZ^\w$, the authomorphism $A$ induces a unique automorphism $\alpha$ of $\IT^\w$ such that $\alpha\circ q=q\circ A$. Then $$\alpha(F)=\alpha\circ q(F')=q\circ A(F')\subset q(q^{-1}(U))=U.$$
\end{proof}

\begin{lemma}\label{l:Aw} There exists a countable subset $A\subset \Aut(\IT^\w)$ such that 
for any finite set $F\subset \IT^\w$ and any neighborhood $U\subseteq \IT^\w$ of zero there exists an automorphism $\alpha\in A$ of $\IT^\w$ such that $F\subset \alpha(U)$.
\end{lemma}

\begin{proof} Fix a countable neighborhood base $\{U_n\}_{n\in\w}$ of zero in the compact topological group $\IT^\w$ such that $U_{n+1}\subseteq U_n$ for all $n\in\w$. For every $n\in\w$ and every $x\in (\IT^\w)^n$, apply Lemma~\ref{l:Talpha} and find an automorphism $\alpha_x$ of $\IT^\w$ such that $\{x(i)\}_{i\in n}\subset \alpha_x(U_n)$. Observe that for every $x\in (\IT^\w)^n$ the set $W_n=\{y\in (\IT^\w)^n:\{y(i)\}_{i\in n}\subset \alpha_x(U_n)\}$ is an open neighborhood of $x$. By the compactness of $(\IT^\w)^n$, the open cover $\{W_x:x\in(\IT^\w)^n\}$ has a finite subcover. Consequently, there exists a finite set $E_n\subset (\IT^\w)^n$ such that $(\IT^\w)^n=\bigcup_{x\in E_n}W_n$. Consider the finite set of automorphisms $A_n=\{\alpha_x:x\in E_n\}$ and observe that for any set $F\subset\IT^\w$ of cardinality $|F|\le n$ there exists an automorphism $\alpha\in A_n$ such that $F\subset \alpha(U_n)$. Then the countable set $A=\bigcup_{n=1}^\infty A_n\subset\Aut(\IT^\w)$ has the required property.
\end{proof}

\begin{lemma}\label{l:Ak} For any infinite cardinal $\kappa$, the automorphism group $\Aut(\IT^\kappa)$ of the compact topological group $\IT^\kappa$ contains a countable absorbing subgroup.
\end{lemma}

\begin{proof} Since $\kappa$ is infinite, we can identify $\IT^\kappa$ with $(\IT^\w)^\kappa$. 
By Lemma~\ref{l:Aw}, there exists a sequence of automorphisms $\{\alpha_n\}_{n\in\w}\subset \Aut(\IT^\w)$ such that for every finite set $F\subset\IT^\w$ and any neighborhood $U\subseteq\IT^\w$ of zero,  there exists $n\in\w$ such that $F\subset \alpha_n(U)$. Each automorphism $\alpha_n$ induces the automorphism $$\tilde\alpha_n:(\IT^\w)^\kappa\to(\IT^\w)^\kappa,\;\;\tilde\alpha_n:(x_i)_{i\in\kappa}\mapsto (\alpha_n(x_i))_{i\in\kappa},$$
of the compact topological group $(\IT^\w)^\kappa$. We claim that the countable set  $A=\{\tilde \alpha_n\}_{n\in\w}\subset\Aut\big((\IT^\w)^\kappa\big)$ generates an absorbing subgroup $\langle A\rangle$ of $\Aut((\IT^\w)^\kappa)$. 

Fix any neighborhood of zero $U\subseteq(\IT^\w)^\kappa$ and any point $x=(x_i)_{i\in\kappa}\in (\IT^\w)^\kappa$. By the definition of the Tychonoff product topology, there exists a finite set $F\subset\kappa$ and a neighborhood $V\subseteq\IT^\w$ of zero such that $U\supseteq\pr_F^{-1}(V^F)$ where $\pr_F:(\IT^\w)^\kappa\to(\IT^\w)^F$, $\pr_F:x\mapsto x{\restriction}F$, is the natural projection. The choice of the sequence $\{\alpha_n\}_{n\in\w}$ guarantees that for the finite set $\{x(i)\}_{i\in F}\subset\IT^\w$ there exists $n\in\w$ such that $\{x(i)\}_{i\in F}\subset\alpha_n(V)$. Then $\pr_F(x)\in (\alpha_n(V))^F$ and $x\in \tilde\alpha_n(\pr^{-1}_F(V^F))\subseteq \tilde\alpha_n(U)$. Therefore, $(\IT^\w)^\kappa=\bigcup_{n\in\w}\tilde \alpha_n(U)=\bigcup_{\alpha\in\langle A\rangle}\alpha(U)$, which means that the subgroup $\langle A\rangle$ is absorbing.
\end{proof} 

\begin{lemma}\label{l:final1} For any infinite cardinal $\kappa$, the compact topological group $\IT^\kappa$ embeds into a duoseparable locally compact abelian-by-countable topological group.
\end{lemma}

\begin{proof} By Lemma~\ref{l:Ak}, the automorphism group $\Aut(\IT^\kappa)$ contains a countable absorbing subgroup $H\subseteq\Aut(\IT^\kappa)$. Endow $H$ with the discrete topology and observe that the semidirect product $\IT^\kappa\rtimes H$ is a locally compact abelian-by-countable topological group. By Proposition~\ref{p:auto}, the topological group $\IT^\kappa\rtimes H$ is duoseparable.
\end{proof}

\begin{lemma}\label{l:final2} Each $\sigma$-compact locally compact abelian group $G$ embeds into the product $\IR^n\times\IT^\kappa\times H$ for suitable $n\in\w$, cardinal $\kappa$ and countable discrete group $H$.
\end{lemma}

\begin{proof} By  the Principal Structure Theorem 25 for LCA-groups in \cite{Morris}, $G$ contains an open subgroup $G_0$, which is isomorphic to $\IR^n\times K$ for some $n\in\w$ and some abelian compact topological group $K$. By Theorem 14 in \cite{Morris}, the compact abelian topological group $K$ admits a continuous injective homomorphism $K\to\IT^\kappa$ for a suitable cardinal $\kappa$. Then the locally compact group $G_0\cong \IR^n\times K$ admits a homomorphism $h:G_0\to\IR^n\times\IT^\kappa$, which is a topological embedding.
Since the group $\IR^n\times\IT^\kappa$ is divisible, the homomorphism $h$ extends to a homomorphism $\bar h:G\to \IR^n\times\IT^\kappa$ (see Proposition 17 in \cite{Morris}). The homomorphism $\bar h$ is continuous because it has continuous restriction $\bar h{\restriction}G_0=h$ to the open subgroup $G_0$ of $G$. The $\sigma$-compactness of the topological group $G$ implies the $\sigma$-compactness of the quotient group $G/G_0$. Being discrete and $\sigma$-compact, the topological group $H=G/G_0$ is countable. Let $q:G\to G_0$ be the quotient homomorphism. Then $(\bar h,q):G\to (\IR^n\times\IT^\kappa)\times H$, $(\bar h,q):x\mapsto (\bar h(x),q(x))$, is the required isomorphic embedding of $G$ into the product $\IR^n\times \IT^\kappa\times H$. 
\end{proof}

Observing that the product of duoseparable (abelian-by-countable) topological groups is duoseparable (and abelian-by-countable), we see that Theorem~\ref{t:main2} follows from Lemmas~\ref{l:final1} and \ref{l:final2}.
\end{proof}
\smallskip

Corollary~\ref{c:main} has an interesting application to constructing duo narrow topological groups which are not narrow. The narrowness is a ``countable'' modification of the precompactness, whose various versions are defined as follows.

\begin{definition}\label{d:precomp} A unital topologized magma $X$ is called
\begin{itemize} 
\item {\em precompact} if for any neighborhood $U\subseteq X$ of the unit there exists a finite subset $S\subseteq X$ such that $SU=X=US$;
\item {\em left precompact} if for any neighborhood $U\subseteq X$ of the unit there exists a finite subset $S\subseteq X$ such that $SU=X$;
\item {\em right precompact} if for any neighborhood $U\subseteq X$ of the unit there exists a finite subset $S\subseteq X$ such that $US=X$;
\item {\em duo precompact} if for any neighborhood $U\subseteq X$ of the unit there exists a finite subset $S\subseteq X$ such that $(SU)S=X=S(US)$;
\item {\em Roelcke precompact} if for any neighborhood $U\subseteq X$ of the unit there exists a finite subset $S\subseteq X$ such that $(US)U=X=U(SU)$.
\end{itemize}
\end{definition}

For topological groups we have the following equivalence (see  \cite[15.81]{Us} or \cite[4.3]{BT}).

\begin{theorem}\label{t:precomp} For a topological group $X$ the following conditions are equivalent:
\begin{enumerate}
\item $X$ is precompact;
\item $X$ is left precompact;
\item $X$ is right precompact;
\item $X$ is duo precompact;
\item $X$ embeds into a compact topological group.
\end{enumerate}
\end{theorem}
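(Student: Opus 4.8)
The plan is to run the cycle $(5)\Ra(1)\Ra(2)\LRa(3)\Ra(1)$ together with $(1)\Ra(4)\Ra(1)$ and the classical equivalence $(1)\LRa(5)$, so that all but one implication reduce to elementary arithmetic with neighbourhoods of the unit. The trivial implications $(1)\Ra(2)$ and $(1)\Ra(3)$ are read off the definitions. For $(2)\LRa(3)$ I would use the inversion homeomorphism $x\mapsto x^{-1}$: applying left precompactness to the neighbourhood $U^{-1}$ gives a finite $S$ with $SU^{-1}=X$, and inverting the whole equation yields $US^{-1}=X$, so left and right precompactness are interchanged. To obtain $(2)\Ra(1)$, for a fixed $U$ I combine a finite $S_1$ with $S_1U=X$ and a finite $S_2$ with $US_2=X$ (from the previous step) into $S:=S_1\cup S_2$, which satisfies $SU=X=US$. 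Finally $(1)\Ra(4)$ follows by enlarging $S$ so that $1_X\in S$, whence $X=SU\subseteq(SU)S$ and $X=US\subseteq S(US)$.

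For $(5)\Ra(1)$ I would restrict a finite cover from the compact overgroup. Given a compact $K\supseteq X$ and a neighbourhood $U$ of $1$ in $X$, extend $U$ to an open set $\tilde U\subseteq K$, choose a symmetric $\tilde V$ with $\tilde V\tilde V\subseteq\tilde U$, and cover $K=F\tilde V$ with $F$ finite. For each $f\in F$ meeting $X$ pick $x_f\in f\tilde V\cap X$; then $f\tilde V\subseteq x_f\tilde V\tilde V\subseteq x_f\tilde U$, so intersecting with $X$ gives $X=\{x_f\}U$, a finite left $U$-cover. Hence $X$ is left precompact, and $(1)$ follows by the equivalences above. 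The reverse implication $(1)\Ra(5)$ is the main classical input, which I would borrow rather than reprove: the two-sided (Weil--Raikov) completion $\hat X$ of a topological group is a complete topological group containing $X$ as a dense subgroup, total boundedness passes from the dense subgroup $X$ to $\hat X$, and a complete totally bounded topological group is compact; thus $X$ embeds as a subgroup of the compact group $\hat X$.

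The crux is $(4)\Ra(1)$, and I expect it to be the main obstacle. The identity $sUs'=(ss')\,(s'^{-1}Us')$ rewrites $SUS=\bigcup_{s,s'\in S}(ss')(s'^{-1}Us')$, so with the finite set $SS=\{ss':s,s'\in S\}$ and the neighbourhood $V_*=\bigcup_{s\in S}s^{-1}Us\supseteq U$ one gets $X=(SS)\,V_*$, i.e. left total boundedness with respect to the single neighbourhood $V_*$. The difficulty is that $V_*$ is a finite union of \emph{conjugates} of $U$ and need not shrink as $U$ shrinks, so this does not by itself produce finite left $W$-covers for an arbitrary target $W$; overcoming this conjugation-invariance is exactly where the finiteness of $S$ must be exploited, in contrast to the countable duoseparable setting where no such collapse occurs. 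I would finish by passing to the completion $\hat X$, which is again duo precompact with a finite $S\subseteq\hat X$ for each neighbourhood, and proving $\hat X$ compact by a net/ultrafilter argument: an ultrafilter selects a single member $sUt$ from the finite cover $SUS=\hat X$, and completeness together with the finiteness of $S$ is used to upgrade this to two-sided Cauchyness. This last step is precisely the content of \cite[15.81]{Us} and \cite[4.3]{BT}, whose technical details I would follow.
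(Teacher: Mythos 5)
Your proposal is consistent with the paper, which in fact offers no proof of this theorem at all: it simply cites \cite[15.81]{Us} and \cite[4.3]{BT}. Your routine implications are all correct --- $(1)\Rightarrow(2),(3)$ by definition, $(2)\Leftrightarrow(3)$ via inversion, $(2)\wedge(3)\Rightarrow(1)$ by taking the union of the two finite sets, $(1)\Rightarrow(4)$ by adjoining the unit to $S$, $(5)\Rightarrow(1)$ by intersecting a finite $\tilde V$-cover of the compact overgroup with $X$, and $(1)\Rightarrow(5)$ via the Weil completion --- and you correctly isolate $(4)\Rightarrow(1)$ as the only implication with real content, including an accurate diagnosis of why the naive rewriting $SUS\subseteq(SS)\bigl(\bigcup_{s\in S}s^{-1}Us\bigr)$ does not close the argument (the conjugated neighbourhood need not shrink with $U$). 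Since you defer exactly that implication to the same two references the paper cites, your treatment is at least as complete as the paper's; just be aware that $(4)\Rightarrow(1)$ is the entire substance of the theorem beyond classical facts, so a self-contained write-up would still owe the reader the Uspenskij/Bouziad--Troallic argument rather than a sketch of its difficulty.
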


The Roelcke precompactness is not equivalent to the precompactness because of the following fundamental result of Uspenski \cite{Us} (which nicely complements our Corollary~\ref{c:main}).

\begin{theorem}[Uspenskij] Each topological group embeds into a Roelcke precompact topological group.
\end{theorem}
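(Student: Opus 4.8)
The construction developed in this paper yields duoseparable groups through semidirect products $X\rtimes H$ with $H$ discrete, and this mechanism cannot be reused here: an infinite discrete $H$ embeds as a closed discrete subgroup of $X\rtimes H$, so such a product is never precompact in any of the senses of Definition~\ref{d:precomp}. The plan is therefore to realize the ambient group as an isometry group of a bounded, highly homogeneous metric space, where Roelcke precompactness will follow from a finite counting argument on distance matrices.

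First I would reduce to the metric situation. The left uniformity of a topological group $G$ is generated by the family $\mathcal D$ of all continuous left-invariant pseudometrics on $G$ bounded by $1$. For each $d\in\mathcal D$ let $M_d$ be the metric space obtained from $(G,d)$ by gluing points at $d$-distance $0$; left translations make $G$ act on $M_d$ by isometries, giving a continuous homomorphism $G\to\mathrm{Iso}(M_d)$, where $\mathrm{Iso}(M_d)$ carries the topology of pointwise convergence. Since $\mathcal D$ generates the topology of $G$, the diagonal map $G\to\prod_{d\in\mathcal D}\mathrm{Iso}(M_d)$ is a topological embedding.

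Next I would enlarge each bounded metric space $M_d$ to a bounded Urysohn space $\mathbb U_d$ of the appropriate weight, i.e.\ a complete ultrahomogeneous metric space of diameter $\le 1$ realizing all admissible one-point extensions, chosen so that the isometry action of $M_d$ extends: using a Katětov-type back-and-forth construction one arranges that $\mathrm{Iso}(M_d)$ acts on $\mathbb U_d$, yielding a topological embedding $\mathrm{Iso}(M_d)\hookrightarrow\mathrm{Iso}(\mathbb U_d)$. Composing with the previous step embeds $G$ into $\prod_d\mathrm{Iso}(\mathbb U_d)$.

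The crux is then twofold. First, $\mathrm{Iso}(\mathbb U)$ is Roelcke precompact for any bounded Urysohn space $\mathbb U$: given a basic symmetric neighbourhood $V=\{g:d(ga_i,a_i)\le\e,\ i\le k\}$ of the identity determined by points $a_1,\dots,a_k\in\mathbb U$, the double coset $VgV$ is determined up to $\e$-error by the finite matrix $\big(d(ga_i,a_j)\big)_{i,j\le k}\in[0,1]^{k\times k}$, which ranges over a finite $\e$-net because $\mathrm{diam}\,\mathbb U\le 1$; hence there are finitely many double cosets, and a finite set $S$ of representatives satisfies $VSV=\mathrm{Iso}(\mathbb U)$, so $(US)U=\mathrm{Iso}(\mathbb U)=U(SU)$ for every neighbourhood $U\supseteq V$. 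Second, an arbitrary product of Roelcke precompact groups is Roelcke precompact, which is routine: a basic neighbourhood in a product depends on finitely many coordinates, so a finite product of the coordinatewise finite sets of representatives does the job. The hard part will be the equivariant Urysohn step: one must build $\mathbb U_d$ so that the whole group $\mathrm{Iso}(M_d)$ — not merely isometries of finite subsets — extends, and verify that the pointwise-convergence topology is preserved by the embedding $\mathrm{Iso}(M_d)\hookrightarrow\mathrm{Iso}(\mathbb U_d)$; the finiteness of double cosets rests squarely on the ultrahomogeneity and one-point extension property of $\mathbb U_d$, which is precisely what lets two elements with $\e$-close distance matrices be joined inside a single double coset.
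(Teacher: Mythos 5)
The paper does not actually prove this statement: it is quoted as Uspenskij's theorem and attributed to \cite{Us}, so there is no internal proof to compare against. Your sketch is, in substance, Uspenskij's own argument: reduce to isometry groups of the bounded metric quotients $M_d$ associated with continuous left-invariant pseudometrics $d\le 1$; equivariantly enlarge each $M_d$ to a bounded highly homogeneous metric space; observe that the isometry group of such a space is Roelcke precompact because, for the neighbourhood $V$ determined by points $a_1,\dots,a_k$ and $\e>0$, the double coset $VgV$ is controlled by the matrix $\big(d(ga_i,a_j)\big)_{i,j}$ ranging in the compact cube $[0,1]^{k\times k}$; and finish with the (correct and routine) remark that Roelcke precompactness is preserved by Tychonoff products. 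The counting step is fine provided you prove its quantitative form: if two isometries have distance matrices agreeing to within a suitable $\delta(\e)$, they lie in a common double coset $VgV$; this is exactly where approximate homogeneity is used. One peripheral overstatement: a Roelcke precompact group can contain an infinite closed discrete subgroup (e.g.\ suitable cyclic subgroups of the full symmetric group with the pointwise topology), so your opening argument only rules out left/right/duo precompactness of the paper's semidirect products, not their Roelcke precompactness; this does not affect your proof.

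The one genuine gap is the step you yourself flag as ``the hard part.'' For nonseparable $M_d$ one cannot simply invoke ``a bounded Urysohn space of the appropriate weight'': full ultrahomogeneity together with universality at a prescribed uncountable weight is set-theoretically delicate, and in any case the whole group $\mathrm{Iso}(M_d)$, not merely finite partial isometries, must act on the extension with the pointwise topology preserved. Uspenskij handles this by iterating finite-support Kat\v{e}tov extensions interleaved with a homogenization step, producing a bounded $\w$-homogeneous space of the same weight into which $M_d$ embeds so that $\mathrm{Iso}(M_d)\hookrightarrow\mathrm{Iso}(\cdot)$ is a topological group embedding ($g$-embedding in his terminology); $\w$-homogeneity over finite subsets is all the double-coset count needs. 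Until that equivariant extension is carried out in detail, what you have is a faithful outline of the proof in \cite{Us} rather than a complete proof.
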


Replacing the adjective ``finite'' in Definition~\ref{d:precomp} by ``countable'', we obtain various modifications of the narrowness.

\begin{definition}\label{d:n} A unital topologized magma $X$ is called
\begin{itemize} 
\item {\em narrow} if for any neighborhood $U\subseteq X$ of the unit there exists a countable subset $S\subseteq X$ such that $SU=X=US$;
\item {\em left narrow} if for any neighborhood $U\subseteq X$ of the unit there exists a countable subset $S\subseteq X$ such that $SU=X$;
\item {\em right narrow} if for any neighborhood $U\subseteq X$ of the unit there exists a countable subset $S\subseteq X$ such that $US=X$;
\item {\em duo narrow} if for any neighborhood $U\subseteq X$ of the unit there exists a countable subset $S\subseteq X$ such that $(SU)S=X=S(US)$;
\item {\em Roelcke narrow} if for any neighborhood $U\subseteq X$ of the unit there exists a countable subset $S\subseteq X$ such that $(US)U=X=U(SU)$.
\end{itemize}
\end{definition}

Narrow topological groups were introduced by Guran \cite{Gur} who proved that a topological group is narrow if and only if it embeds into a Tychonoff product of second-countable topological groups. In \cite{Gur} and \cite{AT,Tka} narrow topological groups are called $\w$-bounded and $\w$-narrow, respectively. By Lemma 3.31 in \cite{Pachl}, a topological group $X$ is narrow if and only if for any neighborhood $U\subseteq X$ of the identity there exist a countable set $C\subseteq X$ and a finite set $F\subseteq X$ such that $CUF=X$.

For topological groups the properties introduced in the Definitions~\ref{d:sep}, \ref{d:precomp}, \ref{d:n} relate as follows.
{\small
$$\xymatrix{
\mbox{duoseparable}\ar@{=>}[d]&\mbox{left separable}\ar@{=>}[l]\ar@{<=>}[r]\ar@{=>}[d]&\mbox{separable}\ar@{<=>}[r]\ar@{=>}[d]&\mbox{right separable}\ar@{<=>}[r]\ar@{=>}[d]&\mbox{Roelcke separable}\ar@{=>}[d]\\
\mbox{duo narrow}&\mbox{left narrow}\ar@{<=>}[r]\ar@{=>}[l]&\mbox{narrow}\ar@{<=>}[r]&\mbox{right narrow}\ar@{=>}[r]&\mbox{Roelcke narrow}\\
\mbox{duo precompact}\ar@{=>}[u]\ar@{<=>}[r]\ar@{=>}[u]&\mbox{left precompact}\ar@{<=>}[r]\ar@{=>}[u]&\mbox{precompact}\ar@{<=>}[r]\ar@{=>}[u]&\mbox{right precompact}\ar@{=>}[r]\ar@{=>}[u]&\mbox{Roelcke precompact}\ar@{=>}[u]
}
$$
}

Looking at Theorem~\ref{t:precomp}, it is natural to ask if the narrowness is equivalent to the duo narrowness. But this is not true.

\begin{example} Take any nonseparable Banach space $H$, and consider the absorbing automorphism $\alpha:H\to H$, $\alpha:x\mapsto x+x$, and the semidirect product $X=H\rtimes_\alpha \IZ$. By Proposition~\ref{p:auto}, the topological group $X$ is duoseparable and hence duo narrow but it is not narrow (as the narrowness is inherited by subgroups).
\end{example} 

There exists an interesting property generalizing both the precompactness and the separability.

\begin{definition} A unital topologized magma $X$ is called {\em preseparable} if there exists a countable subset $S\subseteq X$ such that for any neighborhood $U\subseteq X$ of the unit there exists a finite set $F\subseteq X$ such that $$X=S(UF)=(SU)F=F(US)=(FUS).$$
\end{definition}

It is is clear that a topological group $X$ is preseparable if it is separable or precompact. More generally, $X$ is preseparable if it contains a separable closed normal subgroup $H\subseteq X$ whose quotient group $X/H$ is precompact.

By Lemma 3.31 in \cite{Pachl}, each preseparable abelian topological group is narrow. On the other hand, the Tychonoff power $\IR^{\mathfrak c^+}$ of $\mathfrak c^+$ many copies of the real line is an example of an abelian topological group which is narrow but is not preseparable. 

Therefore, we have the following implications holding for every topological group.
$$
\xymatrix{
\mbox{precompact}\ar@{=>}[r]&\mbox{preseparable}\ar@{=>}[d]&\mbox{separable}\ar@{=>}[l]\\
&\mbox{narrow}
}
$$

\section*{Acknowledgement}

The authors express their sincere thanks to Jan Pachl for his valuable comment on  the characterization of narrow topological groups, given in Lemma 3.31 in his book \cite{Pachl}.
\newpage

\end{document}